\documentclass[12pt]{article} \textwidth=16.2cm \textheight=21cm
\oddsidemargin=-0.1cm

\usepackage{amsthm}
\usepackage{color}
\usepackage{cite}
\usepackage{graphicx}
\usepackage{psfrag}
\usepackage{url}
\usepackage{stfloats}
\usepackage{amsmath}
\usepackage{amssymb}
\usepackage{graphicx}
\usepackage[titletoc]{appendix}

\def\dref#1{(\ref{#1})}

 \def\dfrac{\displaystyle\frac}

\def\be{\begin{equation}}
\def\bel{\begin{equation}\label}
\def\ee{\end{equation}}
\def\ba{\begin{array}}
\def\ea{\end{array}}
\def\banl{\begin{eqnarray}\label}
\def\ean{\end{eqnarray}}

 \def\bna{\begin{eqnarray}}
\def\ena{\end{eqnarray}} \def\dref#1{(\ref{#1})}

\newtheorem{thm}{Theorem}
\newtheorem{lem}{Lemma}

\newtheorem{ass}{Assumption}

\def\dref#1{(\ref{#1})}

\begin{document}

\title{  On Instability of LS-based Self-tuning Systems with Bounded Disturbances  \footnotemark[1]
\author{Shuai Xu\footnotemark[2]
, \,Chanying Li \footnotemark[2]
          }
}

\footnotetext[1]{The work was supported by  the National Natural Science Foundation of China under Grants 61422308 and 11688101.  }

\footnotetext[2]{%
S.~Xu and C.~Li are with the Key Laboratory of Systems and Control, Academy of Mathematics and Systems Science, Chinese Academy of Sciences, Beijing 100190, P.~R.~China.   They are also with the School of Mathematical Sciences, University of Chinese Academy of Sciences, Beijing 100049, P. R. China.  Corresponding author: Chanying~Li (Email: \texttt{cyli@amss.ac.cn}).}

 \maketitle

\begin{abstract}
It is well known that discrete-time linear systems can be stabilized by a least-squares (LS) based self-tuning regulator (STR), as long as noises are absent. However, this note shows that  once the discrete-time linear systems are disturbed, the LS-based STR  is always running the risk of unstabilizing systems, no matter how small the noises are.

\end{abstract}

\small\textbf{Keywords:}
Least-squares,  self-tuning regulators,  parametrization,   linear systems, discrete-time, instability, noises

\section{Introduction}
It was proved early in \cite{goodwin1980discrete} that the following noise-free system
$$
A(q^{-1}) y_{t+1}=B(q^{-1})u_t
$$
can be stabilized by a least-squares based self-tuning regulator. In fact, as long as the noise is absent,  the LS-based STR even is capable of stabilizing the nonlinear  discrete-time system
\begin{equation}\label{systhe}
y_{t+1}=\theta^T f_t(y_t,\ldots, y_{t-p+1},u_{t-1},\dots,u_{t-q} ),
\end{equation}
whenever $\|f_t(x)\|=O(1)+O(\|x\|^b)$ with $b<8$ (see \cite{guo1996global}). But  what will happen if systems are disturbed by bounded noises? This may be more practical.
Relevant works in the stochastic framework  shed some light (e.g., \cite{aastrom1973self},
\cite{aastrom1977theory}, \cite{kumar1990convergence},  \cite{guo1991aastrom}, \cite{guo1995convergence}). \cite{guo1995convergence} studied the ARMA model, which is corrupted by a sequence of martingale difference noises, and derived the stability and optimality of the LS-based STR. Meanwhile,  the strong consistency of the LS estimator is guaranteed in the closed loop. Later on,
number $b=4$ and a polynomial criterion  have been put forward as the critical nonlinear characterizations  of the stabilizability   for systems in type \dref{systhe} but with random noises involved (see \cite{guo1997critical} and \cite{li2013stabilization}). Such systems can be stabilized by the LS-based STR,  when their nonlinearities are within the critical nonlinear conditions. Otherwise, no feedback control law is possible to stabilize them. This suggests that  noises play an  role here. The critical nonlinear growth rates are apparently  reduced by the involvement of noises.

 Now, a direct consequence of \cite{guo1991aastrom} indicates that if the noises are assumed to be bounded and i.i.d distributed, then with probability $1$, the LS-based STR can stabilize system 
$$
A(q^{-1}) y_{t+1}=B(q^{-1})u_t+C(q^{-1}) w_{t+1}.
$$
The trouble is, there still exist some sequences $\{w_t\}$ with probability $0$ such that the stochastic tools  could do nothing to them. 
 \cite{gusev1993algorithm}    observed the divergence of the LS estimator in a self-tuning system for some special bounded noises. Nevertheless, whether the LS-based self-tuning system is stable or not for bounded  noises  was still unknown to people yet.
We prove in this note that
there indeed exist some bounded noises that   will result in the instability of a LS-based self-tuning system, even for the simplest discrete-time linear model with a scalar unknown parameter.  Perhaps more surprisingly, as our result indicates,   once a discrete-time system is  disturbed, the LS-based STR  is always running the risk of unstabilizing it, no matter how small the noises are. In the meantime,  for the bounded  noises causing  the system unstable,  the LS estimator is proved to be  divergent during the closed-loop identification, as observed in \cite{gusev1993algorithm}.

Notably, though,
 discrete-time uncertain systems with bounded noises are stabilizable as well, provided their nonlinearities meet the polynomial criterion (see
 \cite{li2006robust} and \cite{li2006polynomial}). This means, different from the stochastic framework where the LS-STR converges to the minimal variance controller,  the LS-STR   in the deterministic framework  performs no longer ``optimal''.

\section{   Main Results  }
Consider  the  discrete-time single-input/single-output linear model:
\begin{align}\label{model}
 y_{t+1}=\theta y_t+u_t+w_{t+1},\quad t\geq 0,
\end{align}
where $y_t,u_t,w_t\in \mathbb{R}$ are the system output, input, and noise sequences, respectively.   Parameter $\theta\in \mathbb{R}$ is unknown.  Further, we assume

\begin{ass}
There is a number $w>0$ such that $|w_t| \leq w$ for all $t\geq 1$.
\end{ass}
The standard LS estimate $\theta_t$ of $\theta$  for model \dref{model} reduces to
 \begin{align}
 & \theta_{t+1}=~\theta_t+ \dfrac{1}{r_{t}} y_t (y_{t+1}- u_t-\theta_ty_t ),\label{a1}\\
 &\dfrac{1}{r_{t}} =\dfrac{1}{r_{t-1}} -\frac{y_t^2}{r_{t-1}^2+r_{t-1}y_t^2}, ~~r_0>0, \label{a2}
 \end{align}
where {$\theta_0,r_0$} are the deterministic initial values of the algorithm. The feedback law is designed according to the well-known certainty  equivalence principle:
\begin{align}\label{ut}
u_t=-\theta_t y_t.
\end{align}
Denote $\tilde{\theta}_t\triangleq\theta-\theta_t$, then the closed-loop system \dref{model} and \dref{ut} equals to
\begin{align}\label{1}
y_{t+1}=\tilde{\theta_t}y_t+w_{t+1}.
\end{align}

\begin{thm}\label{thm}
For any initial values $y_0$ and $\theta_0$, there exists a sequence $\{w_t\}$ satisfying Assumption 1  such that\\
(i) the LS estimate error $\tilde{\theta}_t$ diverges that
$\varlimsup_{t \to +\infty} |\tilde{\theta_t}| = +\infty $;\\
(ii) the outputs of the closed-loop system \dref{1}  satisfies
\begin{align}\label{4}
 \varlimsup_{t \to +\infty}\frac{ \sum\limits_{i=1}^t y_i^2}{\sum\limits_{i=1}^t w_i^2}= +\infty.
\end{align}
\end{thm}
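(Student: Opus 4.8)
The plan is to treat the noise $\{w_t\}$ as an adversarial input and to push the closed loop through infinitely many identical cycles, each consisting of a slow \emph{pumping} stage that drives $|\tilde\theta_t|$ up to a prescribed level, followed by a \emph{release} stage that makes the output explode while no noise is spent. The first step is to rewrite the recursions. From \dref{a2} one sees $r_t=r_{t-1}+y_t^2=r_0+\sum_{i=1}^t y_i^2$, and feeding \dref{1} into \dref{a1} collapses the estimator to
\bel{planrec}
\tilde\theta_{t+1}=\tilde\theta_t-\frac{y_ty_{t+1}}{r_t},\qquad y_{t+1}=\tilde\theta_ty_t+w_{t+1}.
\ee
The second identity in \dref{planrec} is the lever: choosing $w_{t+1}$ is the same as choosing $y_{t+1}$ freely, subject only to the one--step budget $|y_{t+1}-\tilde\theta_ty_t|\le w$. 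So the whole problem becomes: design a sequence $\{y_t\}$ obeying this constraint so that both $|\tilde\theta_t|$ and $\bigl(\sum y_i^2\bigr)/\bigl(\sum w_i^2\bigr)$ have limit superior $+\infty$.

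In the pumping stage I start from a ``clean'' state ($|\tilde\theta_t|$ of order one, $|y_t|$ tiny, $r_t$ large) and, at each step, flip the sign of $y$ when $\tilde\theta_t>0$ and keep it when $\tilde\theta_t<0$, with $|y_{t+1}|=\delta_t$ of order $w/(1+|\tilde\theta_t|)$ and small enough that $|w_{t+1}|\le w$ (feasible since $|w_{t+1}|\le|y_{t+1}|+|\tilde\theta_t||y_t|$ is of order $\delta_t(1+|\tilde\theta_t|)$). By \dref{planrec} this makes $|\tilde\theta_t|$ strictly increasing, with $|\tilde\theta_{t+1}|-|\tilde\theta_t|\asymp\delta_t^2/r_t$ and $r_{t+1}-r_t=\delta_t^2$, so $|\tilde\theta_{t+1}|-|\tilde\theta_t|\asymp(r_{t+1}-r_t)/r_t$; hence $|\tilde\theta_t|$ gains $\asymp1$ whenever $r_t$ is multiplied by a bounded factor. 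Thus $|\tilde\theta_t|$ reaches any target $M$ in finitely many steps; writing $R$ for the value of $r_t$ at that instant, a step count (each step costing $|w_t|\asymp w$) gives $\asymp RM^2/w^2$ steps, so the pumping stage adds $\asymp RM^2$ to $\sum w_i^2$ and only $\asymp R$ to $\sum y_i^2$.

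In the release stage I set $w_t\equiv0$, so $y_{t+1}=\tilde\theta_ty_t$ and $\tilde\theta_{t+1}=\tilde\theta_t\,r_{t-1}/r_t$. While $y_t^2\ll r_{t-1}$ the output grows by a factor $\asymp M$ per step and $\tilde\theta_t$ stays of order $M$; once $y_t^2$ overtakes $r_{t-1}$, a couple more steps lift $|y_t|$ to a peak of order $M^2\sqrt R$, after which $|\tilde\theta_t|$ drops below $1$, $y_t\to0$, and $\tilde\theta_t$ converges (by Cauchy--Schwarz, since $\sum y_i^2<\infty$ over this stage) to a limit of modulus $<1$, i.e.\ to a new clean state. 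The release therefore adds $\asymp M^4R$ to $\sum y_i^2$ and nothing to $\sum w_i^2$. Running the cycle repeatedly with targets $M_k\to\infty$ --- and preceding it with a short $w_t\equiv0$ settling phase, plus at most a couple of preliminary steps, to reach the first clean state from the given $y_0,\theta_0$ --- the value $R_k$ of $r_t$ at the end of the $k$-th pumping stage grows fast enough (each release alone multiplies $r_t$ by $\gg1$) that the latest cycle dominates both running sums. Hence immediately after the $k$-th release, $\bigl(\sum y_i^2\bigr)/\bigl(\sum w_i^2\bigr)\asymp M_k^4R_k/(R_kM_k^2)=M_k^2\to+\infty$, which establishes \dref{4}, while $|\tilde\theta_t|=M_k$ at the end of the $k$-th pumping stage establishes part~(i); and $|w_t|\le w$ throughout, so Assumption~1 holds.

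The hard part will be the bookkeeping that welds the cycles together: verifying the one--step feasibility $|w_{t+1}|\le w$ at every instant (in particular that $|\tilde\theta_t|$ keeps its sign through a pumping stage, that the release never enlarges $|\tilde\theta_t|$, and that degeneracies like $y_t=0$ do not stall the construction), tracking precisely the clean state passed from one cycle to the next, and --- most delicately --- estimating the orders of $r_t$, $\sum y_i^2$ and $\sum w_i^2$ with honest constants (not merely $\asymp$) so as to be sure the post--release ratio genuinely diverges rather than saturating at a finite value. The tension is that pumping is ``expensive'': $r_t$ is forced to inflate before $|\tilde\theta_t|$ can grow, which forces $\sum w_i^2\asymp RM^2$, so the release beats it only by the factor $M^2$, and every estimate must be tight enough to see that factor.
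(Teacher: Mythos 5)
Your plan is essentially the paper's own proof: the noise is treated as an adversarial choice of $y_{t+1}$ within the budget $w$, a sign-controlled pumping stage with small outputs drives $|\tilde{\theta}_t|\to\infty$ while keeping $|w_t|\le w$ (the paper's Lemmas 3.1--3.2, which use $|y_t|=w/(2\sqrt{t})$ with constant sign), a zero-noise release lets $y_{t+1}=\tilde{\theta}_ty_t$ explode past $r_{t-1}$ and then settle to a clean state (Lemma 3.3), and the cycles are iterated with growing targets (Lemma 3.4 and the theorem's proof). The only quantitative slip is that the guaranteed post-release output energy is of order $M^3R$ rather than $M^4R$ (the overshoot of $y_l^2$ past $r_{j-1}$ can cost a factor of $M$), so the attainable ratio is $\asymp M$ rather than $M^2$ --- which still diverges, so the argument stands.
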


\section{Proof of Theorem \ref{thm} }
The proof is based on several lemmas.
\begin{lem}\label{wtw}
Let  $k \geq 2$ be an integer that
\begin{align}\label{k}
r_{k-1} \geq w^2,~~|\tilde{\theta}_{k-1}| \leq 1,~~|y_{k-1}| \leq \frac{w}{2\sqrt{k-1}}.
\end{align}
Then,  $|w_t|\leq w$ for all $t\geq k$, if
 \begin{align}\label{wt}
w_t= -\tilde{\theta}_{t-1} y_{t-1} + \mathcal{S}(y_{t-1})\frac{w}{2\sqrt{t}},\quad t \geq k,
\end{align}
where
\begin{equation*}
\mathcal{S}(x) \triangleq
\left\{
\begin{array}{ll}
-1, &\mbox{if}~x < 0\\
1,  &\mbox{if}~x \geq 0
\end{array}.
\right.
\end{equation*}
\end{lem}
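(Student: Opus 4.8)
The plan is to exploit the fact that the choice \dref{wt} is engineered precisely so that the closed loop \dref{1} collapses into an explicit recursion for the output. First I would substitute \dref{wt} into \dref{1}: for every $t\ge k$,
\[
y_t=\tilde\theta_{t-1}y_{t-1}+w_t=\mathcal{S}(y_{t-1})\frac{w}{2\sqrt{t}},
\]
so that $|y_t|=\frac{w}{2\sqrt{t}}$ for $t\ge k$; together with the hypothesis $|y_{k-1}|\le\frac{w}{2\sqrt{k-1}}$ in \dref{k} this gives the uniform bound $|y_j|\le\frac{w}{2\sqrt{j}}$ for all $j\ge k-1$. (The recursion \dref{wt} is causal, since $w_t$ depends only on $\tilde\theta_{t-1}$ and $y_{t-1}$, which are in turn determined by \dref{a1}--\dref{a2}, \dref{1} and $w_k,\dots,w_{t-1}$, so $\{w_t\}$ is well defined.)

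Next I would control the estimation error $\tilde\theta_t$. Using \dref{ut}, the regression innovation $y_{t+1}-u_t-\theta_ty_t$ appearing in \dref{a1} reduces to $y_{t+1}$, so $\tilde\theta_{j+1}=\tilde\theta_j-r_j^{-1}y_jy_{j+1}$ and hence $|\tilde\theta_{j+1}-\tilde\theta_j|=|y_j||y_{j+1}|/r_j$. By \dref{a2}, $r_j=r_{j-1}+y_j^2$ is nondecreasing, so $r_j\ge r_{k-1}\ge w^2$ for $j\ge k-1$; combining this with the output bound from the first step yields $|\tilde\theta_{j+1}-\tilde\theta_j|\le\frac{1}{4\sqrt{j(j+1)}}$ for $j\ge k-1$. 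Summing from $k-1$ to $s-1$ and using the elementary telescoping estimate $\frac{1}{\sqrt{j(j+1)}}\le 2(\sqrt{j+1}-\sqrt{j})$ (which is just $j^{-1/2}+(j+1)^{-1/2}\le2$ in disguise), I would obtain, for $s\ge k$,
\[
|\tilde\theta_s|\le|\tilde\theta_{k-1}|+\tfrac12\big(\sqrt{s}-\sqrt{k-1}\big)\le 1+\tfrac12\sqrt{s}-\tfrac12=\tfrac{1+\sqrt{s}}{2}\le\sqrt{s},
\]
where $|\tilde\theta_{k-1}|\le1$ and $\sqrt{k-1}\ge1$ (since $k\ge2$) were used; the remaining case $s=k-1$ is immediate from $|\tilde\theta_{k-1}|\le1\le\sqrt{k-1}$. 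Hence $|\tilde\theta_s|\le\sqrt{s}$ for all $s\ge k-1$.

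With these two facts the conclusion falls out: for every $t\ge k$, \dref{wt} gives $|w_t|\le|\tilde\theta_{t-1}||y_{t-1}|+\frac{w}{2\sqrt{t}}$, and since $t-1\ge k-1$ both $|\tilde\theta_{t-1}|\le\sqrt{t-1}$ and $|y_{t-1}|\le\frac{w}{2\sqrt{t-1}}$ hold, so $|\tilde\theta_{t-1}||y_{t-1}|\le\frac{w}{2}$ and therefore $|w_t|\le\frac{w}{2}+\frac{w}{2\sqrt{t}}\le w$, as required.

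I expect the only genuinely delicate point to be the bookkeeping in the middle step: one must check that the accumulated drift of $\tilde\theta_s$ never outgrows $\sqrt{s}$, including at the small indices just above $k$ where this is nearly tight. This is exactly why the perturbation in \dref{wt} is scaled by $1/(2\sqrt{t})$ rather than by a constant — with a constant amplitude, $|y_{t-1}|$ would not decay, while $|\tilde\theta_{t-1}|$ still diverges, so $|\tilde\theta_{t-1}||y_{t-1}|$ would eventually exceed $w$ — and why the three conditions in \dref{k} are calibrated as they are; in particular $r_{k-1}\ge w^2$ is what turns the increments $|y_j||y_{j+1}|/r_j$ into the $\frac14(j(j+1))^{-1/2}$ bound whose partial sums stay below $\frac12\sqrt{s}$. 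Everything else is direct substitution.
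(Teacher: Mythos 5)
Your proof is correct, but the central estimate is obtained by a genuinely different route from the paper's. Both arguments begin identically: substituting \dref{wt} into \dref{1} gives $|y_t|=\frac{w}{2\sqrt{t}}$ for $t\geq k$ (the paper's \dref{*}), and both reduce the claim to bounding $|\tilde{\theta}_{t-1}y_{t-1}|+|y_t|$. The paper then shows that this combined quantity is \emph{monotonically non-increasing} for $t\geq k+1$, by comparing the increment of $|\tilde{\theta}_t y_t|$ against the decrement of $|y_t|$ (inequalities \dref{they-they} and \dref{y-y}, which require the slightly fussy comparison $4\sqrt{t-1}\sqrt{t}\geq\sqrt{t+1}(\sqrt{t}+\sqrt{t+1})$ with a separate check at $t=2$), and must therefore verify the base cases $|w_k|\leq w$ and $|w_{k+1}|\leq w$ by hand before propagating the bound. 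You instead bound the two factors separately: summing the increments $|\tilde{\theta}_{j+1}-\tilde{\theta}_j|=|y_jy_{j+1}|/r_j\leq\frac{1}{4\sqrt{j(j+1)}}$ telescopes (via $\frac{1}{\sqrt{j(j+1)}}\leq 2(\sqrt{j+1}-\sqrt{j})$, valid for $j\geq 1$) to the uniform growth bound $|\tilde{\theta}_s|\leq\sqrt{s}$, which pairs with $|y_s|\leq\frac{w}{2\sqrt{s}}$ to give $|\tilde{\theta}_{t-1}y_{t-1}|\leq\frac{w}{2}$ directly, with no base-case analysis and in fact the slightly stronger conclusion $|w_t|\leq\frac{w}{2}(1+\frac{1}{\sqrt{t}})$. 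All the constants check out ($r_j\geq r_{k-1}\geq w^2$, $\sqrt{k-1}\geq 1$, $\frac{1+\sqrt{s}}{2}\leq\sqrt{s}$ for $s\geq 1$), so your version is a clean and arguably more transparent alternative; what it gives up is the explicit monotone structure of $|\tilde{\theta}_{t}y_{t}|+|y_{t+1}|$, which the paper's formulation makes visible but does not actually reuse elsewhere.
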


\begin{proof}
First of all, note that (\ref{a1}) and (\ref{a2}) yield
\begin{align}
\tilde{\theta}_{t+1}=\tilde{\theta_t}-\frac{y_t y_{y+1}}{r_t},  \label{2}
\end{align}
and  $r_t=\sum\limits_{i=1}^{t}{y_i^2}$ with $r_0=y_0^2$. According to \dref{1} and \dref{wt},
 we have
\begin{align}
y_t = \mathcal{S}(y_{t-1})\frac{w}{2\sqrt{t}},\quad t \geq k,\label{*}
\end{align}

We now verify that both $ w_{k}$ and $w_{k+1}$ are bounded by $w$. As a matter of fact,
by virtue of \dref{1}, \dref{k}  and \dref{*},
\begin{align*}
|w_{k}|
&\leq |\tilde{\theta}_{k-1} y_{k-1}| + |y_{k}| \\
&\leq \frac{w}{2} + \frac{w}{2} = w.
\end{align*}
Moreover,  \dref{2} yields
\begin{align*}
|\tilde{\theta}_{k}|
&\leq |\tilde{\theta}_{k-1}| + |\frac{y_{k-1}y_{k}}{r_{k-1}}|  \\
&\leq |\tilde{\theta}_{k-1}| + |\frac{y_{k-1}y_{k}}{w^2}| \\
&\leq 1+ \frac{1}{4} =\frac{5}{4},
\end{align*}
which by \dref{1} again,
\begin{align}
|w_{k+1}|
\leq |\tilde{\theta}_{k} y_{k}| + |y_{k+1}| \leq \frac{5}{4} ~\frac{w}{2\sqrt{2}} + \frac{w}{2} \leq w. \label{**}
\end{align}

When $t \geq k+1$,  $|y_t| \leq |y_{t-1}|$ due to (\ref{*}). In addition,  \dref{k} means $$r_{t-1} \geq r_{k-1} \geq w^2,$$
 so,  by (\ref{2}) and \dref{*},
\begin{align}\label{they-they}
|\tilde{\theta_t} y_t|-|\tilde{\theta}_{t-1} y_{t-1}|
~&\leq~ (|\tilde{\theta}_{t-1}|+\frac{y_{t-1} y_t}{r_{t-1}})|y_t|-|\tilde{\theta}_{t-1} y_{t-1}| \nonumber\\
~&\leq~ |\frac{y_{t-1} y_t^2}{r_{t-1}}| \leq~ \frac{w^3}{8r_{t-1}\sqrt{t-1}\sqrt{t}\sqrt{t}}\nonumber \\
~&\leq~ \frac{w}{8\sqrt{t-1}\sqrt{t}\sqrt{t}}
\end{align}
and
\begin{align}\label{y-y}
|y_t|-|y_{t+1}|
~&=~\frac{w}{2}(\frac{1}{\sqrt{t}}-\frac{1}{\sqrt{t+1}})\nonumber \\
~&=~\frac{w}{2\sqrt{t}\sqrt{t+1}(\sqrt{t}+\sqrt{t+1})}.
\end{align}
If $t = 2$, we have
\begin{align*}
4\sqrt{t-1}\sqrt{t} = 4\sqrt {2} \geq \sqrt{3}(\sqrt{2}+\sqrt{3}) = \sqrt{t+1}(\sqrt{t}+\sqrt{t+1}).
\end{align*}
For $t \geq 3$, it is also easy to compute
\begin{align*}
4\sqrt{t-1}\sqrt{t}
&= 2\sqrt{\frac{4}{3}t}\sqrt{3(t-1)} \\
&\geq 2\sqrt{t+1}\sqrt{t+1} \\
&\geq \sqrt{t+1}(\sqrt{t}+\sqrt{t+1}).
\end{align*}
Since integer $t \geq 2$, \dref{they-they} and \dref{y-y} shows
$$|\tilde{\theta_t} y_t|-|\tilde{\theta}_{t-1} y_{t-1}|  \leq |y_t|-|y_{t+1}|, $$
which yields
$$|\tilde{\theta_t} y_t|+|y_{t+1}| \leq |\tilde{\theta}_{t-1} y_{t-1}|+|y_t|.$$
As a consequence, by  (\ref{**}), we have
\begin{align*}
|w_t|&=|y_t-\tilde{\theta}_{t-1} y_{t-1}|\leq |\tilde{\theta}_{t-1} y_{t-1}|+|y_t| \\
     &\leq |\tilde{\theta}_{k} y_{k}|+|y_{k+1}| =w.
\end{align*}
The proof is completed.
\end{proof}

\begin{lem}
Let  $k \geq 2$ be an integer fulfilling \dref{k}.
If  $\{w_t\}$ satisfies \dref{wt},
then there is   an integer  $ k' \geq k$  such that $\{|\tilde{\theta_t}|, t \geq k'\}$ is a strictly increasing sequence and $\lim_{t \to +\infty} |\tilde{\theta_t}| = +\infty $.
\end{lem}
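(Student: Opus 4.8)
The plan is to use the explicit closed-loop formula for the output obtained in the proof of Lemma~\ref{wtw}: under \dref{k} and \dref{wt} one has $y_t=\mathcal{S}(y_{t-1})\frac{w}{2\sqrt t}$ for all $t\ge k$, which is \dref{*}. Since $\mathcal{S}$ never changes the sign of its argument, all of $y_k,y_{k+1},\dots$ carry one common sign $\mathcal{S}(y_{k-1})\in\{-1,+1\}$, so
\[
y_ty_{t+1}=\frac{w^2}{4\sqrt{t(t+1)}}>0,\qquad y_t^2=\frac{w^2}{4t},\qquad t\ge k .
\]
Feeding this into the error recursion \dref{2}, $\tilde\theta_{t+1}=\tilde\theta_t-\dfrac{y_ty_{t+1}}{r_t}$ with $r_t=\sum_{i=1}^t y_i^2$ (recall this identity from the proof of Lemma~\ref{wtw}), the subtracted term is strictly positive for every $t\ge k$; hence $\{\tilde\theta_t\}_{t\ge k}$ is a \emph{strictly decreasing} sequence. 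It therefore converges either to a finite limit or to $-\infty$, and the heart of the argument is to exclude the finite-limit case.

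To do so I would prove $\sum_{t\ge k}\dfrac{y_ty_{t+1}}{r_t}=+\infty$. Observe first that $r_t=r_{k-1}+\sum_{i=k}^t y_i^2=r_{k-1}+\frac{w^2}{4}\sum_{i=k}^t\frac1i\to+\infty$, that $r_t=r_{t-1}+y_t^2$, and that $0<r_{t-1}<r_t$ for $t\ge k$ by \dref{k}. Writing the decrement in scale-free form,
\[
\frac{y_ty_{t+1}}{r_t}=\sqrt{\frac{t}{t+1}}\,\frac{y_t^2}{r_t}=\sqrt{\frac{t}{t+1}}\left(1-\frac{r_{t-1}}{r_t}\right)\ge\frac{1}{\sqrt2}\left(1-\frac{r_{t-1}}{r_t}\right),\qquad t\ge k ,
\]
it suffices to note that $\sum_{t\ge k}\bigl(1-\frac{r_{t-1}}{r_t}\bigr)=+\infty$: the partial products telescope, $\prod_{t=k}^{N}\frac{r_{t-1}}{r_t}=\frac{r_{k-1}}{r_N}\to 0$, and by the standard fact that for $a_t\in(0,1)$ the series $\sum(1-a_t)$ diverges precisely when $\prod a_t=0$. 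Summing \dref{2} then gives $\tilde\theta_t=\tilde\theta_k-\sum_{s=k}^{t-1}\frac{y_sy_{s+1}}{r_s}\to-\infty$.

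To finish, set $k'\triangleq\min\{t\ge k:\tilde\theta_t<0\}$; this is a well-defined finite integer, since $\tilde\theta_t\to-\infty$. For every $t\ge k'$, strict monotonicity gives $\tilde\theta_{t+1}<\tilde\theta_t\le\tilde\theta_{k'}<0$, whence $|\tilde\theta_{t+1}|=-\tilde\theta_{t+1}>-\tilde\theta_t=|\tilde\theta_t|$; thus $\{|\tilde\theta_t|\}_{t\ge k'}$ is strictly increasing and $\lim_{t\to+\infty}|\tilde\theta_t|=\lim_{t\to+\infty}(-\tilde\theta_t)=+\infty$.

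The only step carrying any subtlety is the divergence of $\sum y_ty_{t+1}/r_t$: bounding $r_t$ from above by $O(\log t)$ also works but is messier than spotting the telescoping pattern $1-r_{t-1}/r_t$, for which the mere fact $r_t\to+\infty$ is enough. Everything else — the sign bookkeeping for $\{y_t\}$, the monotonicity of $\{\tilde\theta_t\}$, and the choice of $k'$ — is routine.
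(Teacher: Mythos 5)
Your proof is correct and follows essentially the same route as the paper's: same-sign outputs from \dref{*} make the decrement $y_ty_{t+1}/r_t$ in \dref{2} strictly positive, and divergence of its sum is deduced from $r_t\to+\infty$ via the telescoping product of consecutive ratios of $r_t$ (you use $\tfrac{y_t^2}{r_t}=1-\tfrac{r_{t-1}}{r_t}$ with $\prod\tfrac{r_{t-1}}{r_t}\to0$, the paper uses $\tfrac{y_{t+1}^2}{r_t}=\tfrac{r_{t+1}}{r_t}-1$ with $\prod\tfrac{r_{t+1}}{r_t}\to\infty$ — the same argument). The concluding choice of $k'$ is also identical.
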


\begin{proof}
When  $\{w_t\}$ satisfies \dref{wt},  \dref{*} holds and as $t \to +\infty$,
$$r_{t+1} = r_{k} + \sum_{i=k+1}^{+\infty} y_{i}^{2} = r_{k} + \frac{w^{2}}{4} \sum_{i=k+1}^{+\infty} \frac{1}{i}\rightarrow +\infty.$$
So, as $t \to +\infty$,
$$\prod_{i=k+1}^{t} (1+\frac{y_{i+1}^2}{r_i}) = \prod_{i=k+1}^{t} \frac{r_{i+1}}{r_i} = \frac{r_{t+1}}{r_{k+1}} \to +\infty,$$
which immediately shows
$$ \sum_{i=k+1}^{+\infty}\frac{y_{i+1}^2}{r_i} = +\infty.$$

Now, since $y_t y_{t+1} \geq 0$ for all $t \geq k+1$, by \dref{*},
$$\sum_{t=k+1}^{+\infty} \frac{y_t y_{t+1}}{r_t} \geq \sum_{t=k+1}^{+\infty} \frac{y_{t+1}^2}{r_t} = +\infty.$$
In view of  (\ref{2}),  $\tilde{\theta}_t$ decreases monotonically and
 $\lim_{t \to +\infty} \theta_t = -\infty.$
 Let $k'\triangleq\min\{t: \tilde{\theta}_t<0\},$ then $\{|\tilde{\theta}_t|, t \geq k'\}$ is a strictly increasing sequence.
\end{proof}

\begin{lem}\label{lrj}
Let $j\geq 0$ be an integer that $y_j \neq 0,~\tilde{\theta}_j \neq 0$ and $w_t=0$ for all $t \geq j+1$. The following two statements hold: \\
(i) there are some numbers $c_0>0$, $ \alpha \in (0, 1)$ and $t_0\geq j$ such that
$$ |y_t| \leq c_0\alpha^{t},\,\,t\geq t_0 \quad \mbox{and}\quad 0 < \lim\limits_{t \to +\infty} |\tilde{\theta}_t| < 1;$$
\\
(ii) if $|\tilde{\theta}_j| \geq 6$, then there is an integer $l \geq j$ such that
\begin{align}\label{r3r}
r_{l-1} \leq 3r_{j-1}\quad \mbox{and}\quad y_l^2 \geq r_{j-1}.
\end{align}
\end{lem}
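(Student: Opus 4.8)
The plan is to use that, from time $j+1$ on, the closed loop becomes a genuine noise-free self-tuning loop whose dynamics admit a closed form. Substituting $w_t\equiv 0$ ($t\geq j+1$) into \dref{1} and \dref{2} and using $r_t=r_{t-1}+y_t^2>0$ gives, for every $t\geq j$,
\[
y_{t+1}=\tilde\theta_t y_t,\qquad \tilde\theta_{t+1}=\tilde\theta_t\Bigl(1-\frac{y_t^2}{r_t}\Bigr)=\tilde\theta_t\,\frac{r_{t-1}}{r_t}.
\]
Telescoping the second identity yields the key formula $\tilde\theta_t=\tilde\theta_j\,r_{j-1}/r_{t-1}$ for all $t\geq j$. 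In particular $\tilde\theta_t$ keeps the sign of $\tilde\theta_j$, $|\tilde\theta_t|$ is non-increasing (since $r_{t-1}$ is non-decreasing), and $y_t\neq0$ for every $t\geq j$ because $y_j\neq0$.

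For (i), $L:=\lim_{t}|\tilde\theta_t|$ exists in $[0,\infty)$ by monotonicity. I would first rule out $r_t\to\infty$: it would force $|\tilde\theta_t|=|\tilde\theta_j|r_{j-1}/r_{t-1}\to0$, hence $|y_{t+1}|\leq\frac12|y_t|$ eventually, hence $\sum y_t^2<\infty$ and $r_t$ bounded, a contradiction. So $r_t\uparrow r_\infty<\infty$, which already gives $y_t^2=r_t-r_{t-1}\to0$ and $L=|\tilde\theta_j|r_{j-1}/r_\infty\in(0,\infty)$. If $L\geq1$, then $|\tilde\theta_t|\geq1$ for all $t\geq j$, so $|y_t|$ is non-decreasing and $\geq|y_j|>0$, forcing $r_t\to\infty$, contradiction; hence $L<1$. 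Finally, picking any $\alpha\in(L,1)$ and $t_0\geq j$ with $|\tilde\theta_t|\leq\alpha$ for $t\geq t_0$ gives $|y_t|\leq|y_{t_0}|\alpha^{t-t_0}=c_0\alpha^t$ for $t\geq t_0$.

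For (ii), assume $|\tilde\theta_j|\geq6$ and set $T:=\min\{t\geq j:\ r_t\geq 3r_{j-1}\}$, which is finite because (i) gives $r_\infty>|\tilde\theta_j|r_{j-1}\geq 6r_{j-1}$. If $T=j$ take $l=j$: then $r_{l-1}=r_{j-1}$ and $y_j^2=r_j-r_{j-1}\geq 2r_{j-1}\geq r_{j-1}$. If $T\geq j+1$, then for $j\leq t\leq T$ we have $r_{t-1}\leq r_{T-1}<3r_{j-1}$, so $|\tilde\theta_t|=|\tilde\theta_j|r_{j-1}/r_{t-1}>2$ and therefore $y_{t+1}^2=\tilde\theta_t^2 y_t^2>4y_t^2$ for $j\leq t\leq T-1$. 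This geometric growth yields $\sum_{t=j}^{T}y_t^2<y_T^2\sum_{m\geq0}4^{-m}=\frac43 y_T^2$, whereas $\sum_{t=j}^{T}y_t^2=r_T-r_{j-1}\geq 2r_{j-1}$; combining, $y_T^2>\frac32 r_{j-1}\geq r_{j-1}$, so $l=T$ works, using $r_{l-1}=r_{T-1}<3r_{j-1}$.

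The substitution, telescoping, and geometric-series estimate are all routine; the one place needing care is the bookkeeping of constants in (ii). The hypothesis $|\tilde\theta_j|\geq6$ is exactly what makes $|\tilde\theta_t|>6/3=2$ on the window $\{r_{t-1}<3r_{j-1}\}$, and $2^2=4$ is what makes the geometric-series factor $\frac43$ small enough to force $y_T^2>\frac32 r_{j-1}$. A minor structural point is that (ii) genuinely relies on (i), through $r_\infty>6r_{j-1}$, to guarantee that this window is finite.
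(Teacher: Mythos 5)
Your proof is correct and follows essentially the same route as the paper: the same closed-loop identities $y_{t+1}=\tilde\theta_t y_t$, $\tilde\theta_{t+1}=\tilde\theta_t r_{t-1}/r_t$ and the telescoped relation $r_{t-1}\tilde\theta_t=r_{j-1}\tilde\theta_j$, the same monotonicity/contradiction argument for $\lim|\tilde\theta_t|\in(0,1)$, and the same geometric-growth estimate with $|\tilde\theta_t|>2$ on the window $r_{t-1}<3r_{j-1}$ for part (ii). The only differences are cosmetic reorganizations (you establish $r_\infty<\infty$ up front and argue (ii) directly rather than by the paper's contrapositive), so no further comment is needed.
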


\begin{proof}
We now prove statement (i). By (\ref{1}) and (\ref{2}),
\begin{align}
r_{t-1} \tilde{\theta_t} = r_0 \tilde{\theta_1} -  \sum\limits_{i=1}^{t}{y_{i-1}}w_i. \label{3}
\end{align}
From  (\ref{1}), (\ref{2}) and (\ref{3}), for all $t \geq j$,
\begin{align}
&y_{t+1}=\tilde{\theta}_t y_t, \label{5}\\
&\tilde{\theta}_{t+1}=\frac{r_{t-1}}{r_t} \tilde{\theta}_t, \label{6}\\
&r_{t-1} \tilde{\theta}_t = r_{j-1} \tilde{\theta}_j. \label{7}
\end{align}

Since $\{r_t\}$ is an increasing sequence,   (\ref{6}) shows that $\{|\tilde{\theta}_t|, t\geq j\}$ is a decreasing sequence. So, $|\tilde{\theta}_t|$ is bounded for all $t\geq j$ and  $\lim_{t\to +\infty} |\tilde{\theta}_t|$  exists. We assert $\lim_{t \to +\infty} |\tilde{\theta}_t| < 1$. Otherwise,
if $\lim_{t \to +\infty} |\tilde{\theta}_t| \geq 1$, then $|\tilde{\theta}_t| \geq 1$ for all $j\geq t$.
By (\ref{5}), we have
\begin{align}
r_t&=r_{j-1}+y_j^2+y_{j+1}^2+\dots+y_t^2 \nonumber\\
   &=r_{j-1}+y_j^2+\tilde{\theta}_j^2 y_j^2+\dots+ (\prod_{i=j}^{t-1} \tilde{\theta}_i^2)  y_j^2 \label{rty}\\
   &\geq r_{j-1}+(t-j+1)y_j^2.\nonumber
\end{align}
This immediately leads to  $\lim_{t \to +\infty} r_t=+\infty$. By  $\lim_{t \to +\infty} |\tilde{\theta}_t| \geq 1$ again, $$\lim_{t \to +\infty} r_{t-1} \tilde{\theta}_t=+\infty,$$
 which contradicts to  (\ref{7}). Therefore,  $\lim_{t \to +\infty} |\tilde{\theta}_t| < 1$ and hence, $|\tilde{\theta}_{t_0}| < 1$ for some integer $t_0\geq j$.

 When $t \geq t_0\geq j$, by the fact that $\{|\tilde{\theta}_t|, t\geq j\}$ is a decreasing sequence,  (\ref{5}) shows that $$|y_t| \leq |y_{t_0}||\tilde{\theta}_{t_0}|^{t-{t_0}}.$$
 The first formula of (i) is thus derived by letting
 $c_0=|y_{t_0}||\tilde{\theta}_{t_0}|^{-{t_0}}$ and $\alpha=|\tilde{\theta}_{t_0}|\in (0,1)$. So, as $t\to +\infty$,
 \begin{equation}\label{boundr}
 r_t=\sum\limits_{i=1}^{t}{y_i^2}=O(1)+\sum_{i=t_0}^t c\alpha^{i}=O(1).
 \end{equation}
 This together with \dref{7} infers that $\lim_{t \to +\infty} |\tilde{\theta}_t| > 0$.

To prove statement (ii), we first assert that there is an integer  $l\geq j$ that $r_l \geq 3r_{j-1}$. Otherwise,
 $r_{t-1} < 3r_{j-1}$ for all $ t \geq j$.  Therefore, by  $|\tilde{\theta}_j| \geq 6$ and  (\ref{7}), we have for any $t \geq j$,
 \begin{equation}\label{thet1/3}
 |\tilde{\theta}_t| > |\frac{1}{3}\tilde{\theta}_j| \geq 2.
 \end{equation}
Consequently, by \dref{rty}, as $t\to +\infty$,
\begin{align*}
r_t&=r_{j-1}+y_j^2+\tilde{\theta}_j^2 y_j^2+\dots+ (\prod_{i=j}^{t-1} \tilde{\theta}_i^2)  y_j^2 \\
   &>r_{j-1}+ y_j^2+2y_j^2+\dots+2^{t-j}y_j^2\rightarrow+\infty,
\end{align*}
which contradicts to \dref{boundr}. Let $$l\triangleq \min\{t\geq j: r_t \geq 3r_{j-1} \},$$ then  $r_{l-1} \leq 3r_{j-1}$ and $r_l \geq 3r_{j-1}$.

The remainder is devoted to showing  $y_{l}^2 \geq r_{j-1}$. Otherwise, if $y_{l}^2 < r_{j-1}$,
by noting that $r_{t-1} \leq 3r_{j-1}$ for any $j+1 \leq t \leq l$,  \dref{thet1/3} holds as well  for all $ t\in [j+1,l]$ and hence
 $ \frac{1}{|\tilde{\theta}_t|} <\frac{1}{2}$.
Rewrite $r_t$ by
\begin{align*}
r_{l}&=r_{j-1}+y_j^2+\dots+y_{l-1}^2+y_{l}^2 \\
         &=r_{j-1}+\frac{1}{\prod\limits_{i=j}^{l-1} \tilde{\theta}_i^2} y_{l}^2+\dots+\frac{1}{\tilde{\theta}_{l-1}^2} y_{l}^2+y_{l}^2 \\
         &< r_{j-1}+\frac{1}{2^{l-j}} y_{l}^2+\dots+\frac{1}{2} y_{l}^2+y_{l}^2 < 3r_{j-1},
\end{align*}
which derives a contradiction. (ii) is thus proved.
\end{proof}

\begin{lem}\label{j>c}
Let  $k \geq 2$  satisfy \dref{k}. Given a constant $c>0$, set
\begin{equation}\label{wk'j}
w_t=
\left\{
\begin{aligned}
& -\tilde{\theta}_{t-1} y_{t-1} + \mathcal{S}(y_{t-1})\frac{w}{2\sqrt{t}},~~~~k \leq t \leq j\\
&0 ,~~~~t \geq j+1\\
\end{aligned}
\right.
\end{equation}
where $j\geq k$ is an integer such that
\begin{align}
|\tilde{\theta}_j| \geq \max\{|\tilde{\theta}_0|,|\tilde{\theta}_1|,\dots,|\tilde{\theta}_{j-1}|,6, 126c\}. \label{8}
\end{align}
Then,
$( \sum_{i=1}^{+\infty} y_i^2)/(\sum_{i=1}^{+\infty} w_i^2)\geq c. $

\end{lem}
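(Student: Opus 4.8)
\medskip
\noindent\textit{Sketch of the argument.}
The plan is to bound the numerator $\sum_{i\ge1}y_i^2$ from below and the denominator $\sum_{i\ge1}w_i^2$ from above, each in terms of $|\tilde\theta_j|$ and $r_{j-1}$, and then to invoke $|\tilde\theta_j|\ge 126c$. On the range $k\le t\le j$ the sequence $\{w_t\}$ coincides with \dref{wt}, so the computation leading to \dref{*} in the proof of Lemma \ref{wtw} gives $y_t=\mathcal S(y_{t-1})\tfrac{w}{2\sqrt t}$, i.e.\ $y_t^2=\tfrac{w^2}{4t}$, for $k\le t\le j$; in particular $y_j\ne0$, and consecutive $y_t$ on this range share a sign, so $y_{t-1}y_t\ge0$ there. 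Since $w_i=0$ for $i>j$ we have $\sum_{i\ge1}w_i^2=\sum_{i=1}^{j}w_i^2$, so it is enough to estimate $\sum_{i=k}^{j}w_i^2$, the finitely many earlier terms being fixed data that is absorbed by the bound obtained below.

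For the denominator, \dref{2} together with $y_{t-1}y_t\ge0$ shows $\tilde\theta_t$ is non-increasing on $[k-1,j]$, so with $|\tilde\theta_{k-1}|\le1\le|\tilde\theta_j|$ we get $|\tilde\theta_t|\le|\tilde\theta_j|$ for $k-1\le t\le j$. Writing $w_i=y_i-\tilde\theta_{i-1}y_{i-1}$ from \dref{1},
\[
\sum_{i=k}^{j}w_i^2\ \le\ 2\sum_{i=k}^{j}y_i^2+2\sum_{i=k-1}^{j-1}\tilde\theta_i^2y_i^2\ \le\ 2r_j+2|\tilde\theta_j|^2r_{j-1}\ \le\ 3|\tilde\theta_j|^2r_{j-1},
\]
using $r_j=r_{j-1}+y_j^2\le\tfrac54 r_{j-1}$ (since $y_j^2=\tfrac{w^2}{4j}\le\tfrac14 r_{k-1}\le\tfrac14 r_{j-1}$) and $|\tilde\theta_j|\ge6$.

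For the numerator, since $|\tilde\theta_j|\ge6$, $y_j\ne0$, $\tilde\theta_j\ne0$ and $w_t=0$ for $t\ge j+1$, Lemma \ref{lrj} applies with its ``$j$'' taken to be the present $j$. Part (i) gives $\sum_{i\ge1}y_i^2=\lim_t r_t<+\infty$, and part (ii) supplies $l\ge j$ with $r_{l-1}\le3r_{j-1}$ and $y_l^2\ge r_{j-1}$. By \dref{7}, $r_{t-1}|\tilde\theta_t|=r_{j-1}|\tilde\theta_j|=:R$ for every $t\ge j$, whence $|\tilde\theta_l|=R/r_{l-1}\ge|\tilde\theta_j|/3$, and $|\tilde\theta_t|>1$ precisely while $r_{t-1}<R$. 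Running the noise-free recursions \dref{5}--\dref{6} from $l$ onward, the outputs keep growing and are amplified at step $t$ by $|\tilde\theta_t|^2=(R/r_{t-1})^2\ge(|\tilde\theta_j|/3)^2$; since $y_{l+1}^2=\tilde\theta_l^2y_l^2\ge\tfrac{|\tilde\theta_j|^2}{9}r_{j-1}\ge R$ as soon as $|\tilde\theta_j|\ge9$, the first index $l^{*}$ with $r_{l^{*}}\ge R$ satisfies $l^{*}\le l+1$, so $r_{l^{*}-1}=O(r_{j-1})$. Feeding this into $y_{l^{*}+1}^2=\tfrac{R^2}{r_{l^{*}-1}^2}\,(R-r_{l^{*}-1})$ and using $r_{l^{*}-1}\le r_l\le3r_{j-1}+y_l^2$ with $y_l^2\ge r_{j-1}$, a short case split on the size of $y_l^2$ relative to $R$ yields $\sum_{i\ge1}y_i^2\ge\max\{y_{l+1}^2,y_{l+2}^2\}\ge\tfrac1{42}|\tilde\theta_j|^3r_{j-1}$ (while for $|\tilde\theta_j|$ near the threshold $6$, i.e.\ $c$ small, already $\sum_{i\ge1}y_i^2\ge y_{l+1}^2\ge\tfrac19|\tilde\theta_j|^2r_{j-1}$ beats $c\sum_{i\ge1}w_i^2$). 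Combining the two estimates,
\[
\frac{\sum_{i\ge1}y_i^2}{\sum_{i\ge1}w_i^2}\ \ge\ \frac{|\tilde\theta_j|^3r_{j-1}/42}{3|\tilde\theta_j|^2r_{j-1}}\ =\ \frac{|\tilde\theta_j|}{126}\ \ge\ c,
\]
the last step by \dref{8}.

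The routine ingredients are the output formula on $[k,j]$, the monotonicity of $\tilde\theta_t$, and the denominator estimate. The crux — where the real work, and the calibration of the constant $126$, lies — is the numerator bound: showing that the noise-free burst after time $j$ reaches order $|\tilde\theta_j|^3r_{j-1}$, rather than the $|\tilde\theta_j|^2r_{j-1}$ that a single use of Lemma \ref{lrj}(ii) produces (which alone would only bound the ratio by an absolute constant). The key observation is that $r_t$ overshoots $R=r_{j-1}|\tilde\theta_j|$ in essentially one step once it clears $3r_{j-1}$, so $r_{l^{*}-1}$ stays $O(r_{j-1})$ and the amplification $(R/r_{l^{*}-1})^2$ is of order $|\tilde\theta_j|^2$; executing the attendant case analysis and keeping the constants clean is the delicate point.
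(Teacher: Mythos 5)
Your architecture is the paper's: bound $\sum_i w_i^2$ above by $O(|\tilde\theta_j|^2 r_{j-1})$, invoke Lemma \ref{lrj}(ii) to locate $l$ with $r_{l-1}\le 3r_{j-1}$ and $y_l^2\ge r_{j-1}$, extract a numerator contribution of order $|\tilde\theta_j|^3 r_{j-1}$ from the two outputs after time $l$, and divide. The denominator estimate is essentially fine (the paper reaches $7r_{j-1}\tilde\theta_j^2$ by bounding $y_j^2$ through the closed loop rather than through the explicit formula, but your $3r_{j-1}\tilde\theta_j^2$ is also attainable); note that \dref{8} already gives $|\tilde\theta_i|\le|\tilde\theta_j|$ for every $i<j$, so your monotonicity detour and the hand-wave about absorbing $w_1,\dots,w_{k-1}$ are both unnecessary --- just sum $w_i=y_i-\tilde\theta_{i-1}y_{i-1}$ from $i=1$ to $j$ as the paper does.

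The gap is in the numerator, exactly where you say the real work lies. Two of your intermediate claims fail. First, ``$r_{l^*-1}=O(r_{j-1})$'' is not justified: Lemma \ref{lrj}(ii) gives no upper bound on $y_l^2$, so if $l^*=l+1$ then $r_{l^*-1}=r_{l-1}+y_l^2$ can be anywhere up to $R=|\tilde\theta_j|r_{j-1}$, and when $r_{l^*-1}$ is close to $R$ your expression $\tfrac{R^2}{r_{l^*-1}^2}(R-r_{l^*-1})$ degenerates far below $|\tilde\theta_j|^3r_{j-1}$. Second, the fallback for $|\tilde\theta_j|<9$ does not close: $y_{l+1}^2\ge\tfrac19|\tilde\theta_j|^2r_{j-1}$ only yields a ratio of an absolute constant about $\tfrac1{27}$, while $c$ may be as large as $|\tilde\theta_j|/126$, which exceeds $\tfrac1{27}$ for $|\tilde\theta_j|\in(126/27,9)$; e.g.\ $c=1/15$, $|\tilde\theta_j|=8.4$ is covered by neither branch. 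The bound you are after, $\max\{y_{l+1}^2,y_{l+2}^2\}\ge \mathrm{const}\cdot|\tilde\theta_j|^3r_{j-1}$, is in fact true, and the paper obtains it in one line by taking the two terms together: $y_{l+1}^2+y_{l+2}^2=\tilde\theta_l^2y_l^2(1+\tilde\theta_{l+1}^2)\ge 2\tilde\theta_l^2y_l^2|\tilde\theta_{l+1}|$, then $|\tilde\theta_{l+1}|=r_{j-1}|\tilde\theta_j|/(r_{l-1}+y_l^2)$ by \dref{7}, giving $\ge\tfrac29 r_{j-1}|\tilde\theta_j|^3\cdot\tfrac{y_l^2}{3r_{j-1}+y_l^2}\ge\tfrac1{18}r_{j-1}|\tilde\theta_j|^3$. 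That AM--GM step is precisely the device that balances ``$y_l^2$ large, so $y_{l+1}^2$ is large'' against ``$y_l^2$ moderate, so $|\tilde\theta_{l+1}|$ stays large and $y_{l+2}^2$ is large,'' with no case split and no first-passage index; the constant is then $126=18\times 7$. You should replace your $l^*$ argument by this estimate (or, equivalently, by the geometric-mean bound $\max\{y_{l+1}^2,y_{l+2}^2\}\ge\sqrt{y_{l+1}^2y_{l+2}^2}$ computed from \dref{5} and \dref{7}).
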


\begin{proof}
By  (\ref{1}), \dref{k}, \dref{wk'j} and (\ref{8}), we have
\begin{align*}
\sum\limits_{i=1}^{+\infty} w_i^2 &= \sum\limits_{i=1}^{j} w_i^2
= \sum\limits_{i=1}^{j} (y_i-\tilde{\theta}_{i-1} y_{i-1})^2 \\
&\leq 2\sum\limits_{i=1}^{j}y_i^2 + 2\sum\limits_{i=1}^{j} \tilde{\theta}_{i-1}^2 y_{i-1}^2 \\
&\leq 2(r_{j-1}+y_j^2) + 2r_{j-1}\tilde{\theta}_j^2 \\
&\leq 2r_{j-1} + 2(2y_{j-1}^2 \tilde{\theta}_{j-1}^2 + 2w^2) + 2r_{j-1}\tilde{\theta}_j^2 \\
&\leq 2r_{j-1} + 4 w^2 + 4r_{j-1} \tilde{\theta}_{j}^2  + 2r_{j-1}\tilde{\theta}_j^2 \leq 7r_{j-1}\tilde{\theta}_j^2.
\end{align*}
Note that $w_t=0$ for all $t\geq j+1$,
in view of Lemma \ref{lrj}, there exists an integer $l \geq j$ fulfilling \dref{r3r}. By  (\ref{7}), $r_{l-1} \leq 3r_{j-1}$ yields $|\tilde{\theta}_l|^2 \geq \frac{1}{9} |\tilde{\theta}_j|^2$. Then,  \dref{r3r}, (\ref{5}) and (\ref{7}) imply
\begin{align*}
y_{l+1}^2+y_{l+2}^2
&= \tilde{\theta}_l^2 y_l^2 (1+\tilde{\theta}_{l+1}^2) \geq 2\tilde{\theta}_l^2 y_l^2 |\tilde{\theta}_{l+1}| \\
&\geq \frac{2}{9} \tilde{\theta}_j^2 y_l^2 \frac{r_{j-1}|\tilde{\theta}_j|}{r_{l-1}+y_l^2} \geq \frac{2}{9} r_{j-1} |\tilde{\theta}_j|^3 \frac{y_l^2}{3r_{j-1}+y_l^2} \\
&\geq \frac{1}{18} r_{j-1} |\tilde{\theta}_j|^3.
\end{align*}
Moreover, $|\tilde{\theta}_j| \geq 126c$, the above inequality shows
$$\frac{ \sum\limits_{i=1}^{+\infty} y_i^2}{\sum\limits_{i=1}^{+\infty} w_i^2} \geq \frac{y_{l+1}^2+y_{l+2}^2}{7r_{j-1}\tilde{\theta}_j^2} \geq c, $$
which completes the proof.
\end{proof}

\begin{proof}[\textbf{Proof of Theorem 1}]
Set the noise
\begin{equation*}
w_t=
\left\{
\begin{array}{ll}
\mathcal{S}(\tilde{\theta}_0 y_0)w, &t=1\\
0,&2 \leq t \leq k_1-1
\end{array},
\right.
\end{equation*}
where for $\mathcal{K}_1\triangleq\{k\geq 3:   \dref{k} \mbox{ holds} \}$,
\begin{equation}
k_1\triangleq
\left\{
\begin{array}{ll}
\min_{k\in \mathcal{K}_1} k,&  \mathcal{K}_1\neq \emptyset \\
+\infty, &  \mathcal{K}_1= \emptyset
\end{array}.
\right.
\end{equation}
Clearly, $|w_t|\leq w$ for $ t\in [1,k_1-1]$.
We next claim that  $k_1$ is finite. Otherwise, $\mathcal{K}_1=\emptyset$. Then,  \dref{k} fails for every $k \geq 3$ and $w_t=0$ whenever $t\geq 2$.
Now,  $w_1= \mathcal{S}(\tilde{\theta}_0 y_0)w$, which means $|w_1|=w$ and $y_1^2 \geq w^2>0$.  Therefore,
\begin{equation}\label{rk>w}
r_{k-1} \geq r_1\geq w^2,\quad \forall k\geq 3.
\end{equation}
If $\tilde{\theta}_1=0$,  it is easy to compute that $\tilde{\theta}_2=y_2=0$ due to $w_2=0$. So, \dref{k} holds for $k=3$. This asserts
 $\tilde{\theta}_1 \neq 0$.     Consequently, by   Lemma 3(i), there are some $c_0>0$ and $\alpha\in(0,1)$ such that for all
sufficiently large $k\geq 3$,
 \begin{equation} \label{k-1c0}
|y_{k-1}| \leq c_0\alpha^{k-1}\leq   \frac{w}{2\sqrt{k-1}}\quad \mbox{and}\quad |\tilde{\theta}_{k-1}| < 1,
\end{equation}
which  together with \dref{rk>w}  contradicts to $\mathcal{K}_1=\emptyset$. Therefore,   $k_1$ is finite.

Now, fix $k_1\in \mathcal{K}_1$. Define
\begin{equation} \label{j_1}
j_1\triangleq
\left\{
\begin{array}{ll}
\min_{j \in \mathcal{J}_1  } j,  & \mathcal{J}_1 \neq \varnothing\\
+\infty, & \mathcal{J}_1=\varnothing
\end{array},
\right.
\end{equation}
where
\begin{equation*}
\mathcal{J}_1\triangleq\{j\geq k_{1}:|\tilde{\theta}_j| \geq \max\{|\tilde{\theta}_0|,|\tilde{\theta}_1|,\dots,|\tilde{\theta}_{j-1}|, 126\} \}.
\end{equation*}
Moreover, let
\begin{equation*}
k_2\triangleq
\left\{
\begin{array}{ll}
\min_{k \in \mathcal{K}_2 }{k},  & \mathcal{K}_2 \neq \varnothing\\
+\infty, & \mathcal{K}_2=\varnothing
\end{array}
\right.
\end{equation*}
with
\begin{equation*}
\mathcal{K}_2\triangleq\left\{k\geq j_{1}+2: \dref{k} \mbox{ holds and } \sum\nolimits_{i=1}^{k-1} y_i^2 \geq \sum\nolimits_{i=1}^{k-1} w_i^2\right\}.
\end{equation*}
For $j_1$ and $k_2$ defined above, set
\begin{equation*}
w_t=
\left\{
\begin{array}{ll}
 -\tilde{\theta}_{t-1} y_{t-1} + \mathcal{S}(y_{t-1})\dfrac{w}{2\sqrt{t}},&k_1 \leq t \leq j_1\\
0,&j_1+1 \leq t \leq k_2-1
\end{array}.
\right.
\end{equation*}
Noting that \dref{k} is true for $k=k_1$,  by Lemma \ref{wtw},
$
|w_t|\leq w$ for all $t\in [k_1,k_2-1].
$

We proceed to prove that both $j_1$ and $k_2$ are finite. If $j_1=+\infty$, then $w_t$ satisfies \dref{wt} for all $t\geq k_1$. Further,
since  \dref{k} holds for $k=k_1$, by  Lemma 2, $\{|\tilde{\theta}_t|, t\geq k'_1\}$ is an increasing sequence for some $k'_1\geq k_1$ and $\lim_{t \to +\infty} |\tilde{\theta_t}| = +\infty $, which gives $\mathcal{J}_1 \neq \emptyset$. Hence $j_1$ is finite or a contradiction arises. So, by Lemma \ref{j>c}, we immediately deduce that for all sufficiently large $k$,
\begin{equation}\label{wk2}
\sum\limits_{i=1}^{k-1} y_i^2 \geq \sum\limits_{i=1}^{k-1} w_i^2.
\end{equation}
It is clear that $\tilde{\theta}_{j_1}\neq 0$ as $j_1\in \mathcal{J}_1$ and  $y_{j_1} \neq 0$ by \dref{*}. Similar to the proof of $k_1<+\infty$,
Lemma \ref{lrj}(i) shows that $k_2$ is finite.

Suppose two increasing sequences $\{k_i\in \mathbb{N}^+, 1 \leq i \leq s\}$, $\{j_{i}\in [k_i, k_{i+1}-2]\cap \mathbb{N}^+,  1 \leq i \leq s-1\}$  and a series   $\{w_t,1\leq t\leq k_s-1\}$  are  constructed for some $s\geq 2$  such that \dref{k} holds for $k=k_s$, $|\tilde{\theta}_{j_{s-1}}| \geq 126(s-1)$   and
 \begin{equation}\label{yws}
 \sum\limits_{i=1}^{k_s-1} y_i^2 \geq (s-1) \sum\limits_{i=1}^{k_s-1} w_i^2.
 \end{equation}
Analogous arguments of \dref{k-1c0} and \dref{j_1}--\dref{wk2}  yield that there are two finite integers  $k_{s+1}$ and $ j_s$, as well as  a sequence
$\{w_t, k_s\leq t\leq k_{s+1}-1\}$ such that  \dref{k} holds for $k=k_{s+1}$, $|\tilde{\theta}_{j_{s}}| \geq 126s$   and
 $$\sum\limits_{i=1}^{k_{s+1}-1} y_i^2 \geq s \sum\limits_{i=1}^{k_{s+1}-1} w_i^2.$$
 So, there  exists a $\{w_t, t\geq 1\}$ and a $\{(k_s, j_s), s\geq 1\}$  fulfilling  \dref{yws} and $|\tilde{\theta}_{j_s}| \geq 126s$   for each $s\geq 2$. Statements (i) and (ii) are thus derived as desired.
\end{proof}




\begin{thebibliography}{99}

\expandafter\ifx\csname url\endcsname\relax
  \def\url#1{\texttt{#1}}\fi
\expandafter\ifx\csname urlprefix\endcsname\relax\def\urlprefix{URL }\fi
\expandafter\ifx\csname href\endcsname\relax
  \def\href#1#2{#2} \def\path#1{#1}\fi

\bibitem{goodwin1980discrete}
G.~Goodwin, P.~Ramadge, P.~Caines, Discrete-time multivariable adaptive
  control, IEEE Transactions on Automatic Control 25~(3) (1980) 449--456.

\bibitem{guo1996global}
L.~Guo, C.~Wei, Global stability/instability of {LS}-based discrete-time
  adaptive nonlinear control, IFAC Proceedings Volumes 29~(1) (1996)
  5215--5220.

\bibitem{aastrom1973self}
K.~J. {\AA}str{\"o}m, B.~Wittenmark, On self tuning regulators, Automatica
  9~(2) (1973) 185--199.

\bibitem{aastrom1977theory}
K.~J. {\AA}str{\"o}m, U.~Borisson, L.~Ljung, B.~Wittenmark, Theory and
  applications of self-tuning regulators, Automatica 13~(5) (1977) 457--476.

\bibitem{kumar1990convergence}
P.~Kumar, Convergence of adaptive control schemes using least-squares parameter
  estimates, IEEE Transactions on Automatic Control 35~(4) (1990) 416--424.

\bibitem{guo1991aastrom}
L.~Guo, H.-F. Chen, The {{\AA}}str{\"o}m-{W}ittenmark self-tuning regulator
  revisited and {ELS}-based adaptive trackers, IEEE Transactions on Automatic
  Control 36~(7) (1991) 802--812.

\bibitem{guo1995convergence}
L.~Guo, Convergence and logarithm laws of self-tuning regulators, Automatica
  31~(3) (1995) 435--450.

\bibitem{guo1997critical}
L.~Guo, On critical stability of discrete-time adaptive nonlinear control, IEEE
  Transactions on Automatic Control 42~(11) (1997) 1488--1499.

\bibitem{li2013stabilization}
C.~Li, J.~Lam, Stabilization of discrete-time nonlinear uncertain systems by
  feedback based on {LS} algorithm, SIAM Journal on Control and Optimization
  51~(2) (2013) 1128--1151.

\bibitem{gusev1993algorithm}
S.~V. Gusev, Gradient estimation algorithm with dead zone for adaptive control
  of discrete-time plants with mean square bounded disturbances, in: Control
  Conference (ECC), European, IEEE, 1993, pp. 2222--2225.

\bibitem{li2006robust}
C.~Li, L.-L. Xie, On robust stability of discrete-time adaptive nonlinear
  control, Systems \& control letters 55~(6) (2006) 452--458.

\bibitem{li2006polynomial}
C.~Li, L.-L. Xie, L.~Guo, A polynomial criterion for adaptive
  stabilizability of discrete-time nonlinear systems, Communications in
  Information \& Systems 6~(4) (2006) 273--298.




\end{thebibliography}
\end{document}